\newtheorem{thm}{Theorem}[section]
\newtheorem{defi}{Definition}[section]
\newcommand{\be}{\begin{equation}}
\newcommand{\ee}{\end{equation}}
\numberwithin{equation}{section}
\newcommand{\bea}{\begin{eqnarray}}
\newcommand{\eea}{\end{eqnarray}}
\newcommand{\beb}{\begin{eqnarray*}}
\newcommand{\eeb}{\end{eqnarray*}}
\begin{document}
%%%%%%%%%%%%%%%%%%%%%%%%%%%%%%%%%%%%%%%%%%
\title{Rough convergence of sequences in a cone metric space}
\author{Amar Kumar Banerjee$^{1}$ and Rahul Mondal$^{2}$}
\address{\noindent\newline Department of Mathematics,\newline The University of
Burdwan, \newline Golapbag, Burdwan-713104,\newline West Bengal, India.}
\email{akbanerjee@math.buruniv.ac.in, akbanerjee1971@gmail.com}
\email{imondalrahul@gmail.com}
%%%%%%%%%%%%%%%%%%%%%%%%%%%%%%%%%%%%%%%%%%%%%%%%%%%%%%

\begin{abstract}
Here we have introduced the idea of rough convergence of sequences in a cone metric space. Also it has been investigated how far several basic properties of rough convergence as valid in a normed linear space are affected in a cone metric space.
\end{abstract}
%%%%%%%%%%%%%%%%%%%%%%%%%%%%%%%%%%%%%%%%%
\noindent\footnotetext{$\mathbf{2010}$\hspace{5pt}AMS\; Subject\; Classification: 40A05, 40A99.\\
{Key words and phrases: Rough convergence, rough limit point, rough limit set, cone, cone metric space.}}
\maketitle

\vspace{0.5in}

\section{\bf{Introduction}}
%%%%%%%%%%%%%%%%%%%%%%%%%%%%%%%%%%%%%%%
The idea of cone metric spaces was introduced by Huang and Zhang \cite{HLG} as a generalization of metric spaces. The distance $d(x,y)$ between two elements $x$ and $y$ in a cone metric space $X$ is defined to be a vector in a ordered Banach space $E$, where the order relation in the Banach space $E$ is given by using the idea of cone.\\
\indent In 2001 the notion of rough convergence of sequences was introduced  in a normed linear space by Phu \cite{PHU}. Phu discussed about the implication of rough convergence and the relation between ordinary convergence and rough convergence. There he also introduced the notion of rough Cauchy sequences. In 2003 Phu \cite{PHU1} discussed rough convergence of sequences in an infinite dimensional normed linear space as an extension of \cite{PHU}. The idea of rough statistical convergence was given by Ayter \cite{AYTER1} in 2008. Several works has been done in different direction \cite{AYTER2, PMROUGH1, PMROUGH2} by many authors using the idea given by Phu \cite{PHU}.\\
\indent In our work our motivation is to discuss the idea rough convergence of sequences in a cone metric space. But the structure of ordering associated of a cone metric space is the main problem to discuss this particular idea. However we have found out some basic properties of sequences regarding rough convergence in a cone metric space. 
%%%%%%%%%%%%%%%%%%%%%%%%%%%%%%%%%%%%%%%
%%%%%%%%%%%%%%%%%%%%%%%%%%%%%%%%%%%%%%%%
\section{\bf{Preliminaries}}\label{preli}
%%%%%%%%%%%%%%%%%%%%%%%%%%%%%%%%%%%%%%%
\begin{defi}\cite{PHU}
Let $\{ x_{n} \}$ be a sequence in a normed linear space $(X, \left\| . \right\|)$,  and $r$ be a nonnegative real number. Then $\{ x_{n} \}$ is said to be $r$-convergent to $x$ if for any $\epsilon >0$, there exists a natural number $k$ such that $\left\|x_{n} - x \right\| < r +\epsilon$ for all $n \geq k$.
\end{defi}
%%%%%%%%%%%%%%%%%%%%%%%%%%%%%%%%%%%%%%%%%%%
\begin{defi} \cite{HLG}
Let $E$ be a real Banach space and $P$ be a subset of $E$. Then $P$ is called a cone if and only if $(i)$ $P$ is closed nonempty, and $P \neq \{0\}$\\
$(ii)$ $a,b \in \mathbb{R}$, $a,b \geq 0$, $x,y \in P$ implies $ax+by \in P$.\\
$(iii)$ $x \in P$ and $-x \in P$ implies $x=0$.
\end{defi}
%%%%%%%%%%%%%%%%%%%%%%%%%%%%%%%%%%%%%%%%%
Let $E$ be a real Banach space and $P$ be a cone in $E$. Let us use the partial ordering \cite{HLG} with respect to P by $x \leq y$ if and only if $y-x \in P$. We shall write $x <y$ to indicate that $x \leq y$ but $x \neq y$.\\
\indent Also by $x<< y$, we mean $y -x \in intP$, the interior of P. The cone P is called normal if there is a number $K >0$ such that for all $x,y \in E$, $0 \leq x \leq y$ implies $||x|| \leq K ||y||$. 
%%%%%%%%%%%%%%%%%%%%%%%%%%%%%%%%%%%%%%%%%
\begin{defi} \cite{HLG}
Let $X$ be a non empty set. If the mapping $d:X\times X \longrightarrow E$ satisfies the following three conditions\\ 
$(d1)$ $0 \leq d(x,y)$ for all $x,y \in X$ and $d(x,y)=0$ if and only if $x=y$;\\
$(d2)$ $d(x,y)=d(y,x)$ for all $x,y \in X$;\\
$(d3)$ $d(x,y) \leq d(x,z) + d(z,y)$ for all $x, y, z \in X$; then $d$ is called a cone metric on $X$, and $(X,d)$ is called a cone metric space.\\
\indent It is clear that a cone metric space is a generalization of metric spaces. Throughout $(X,d)$ or simply $X$ stands for a cone metric space which is associated with a real Banach space $E$ with a cone $P$, $\mathbb{R}$ for the set of all real numbers, $\mathbb{N}$ for the set of all natural numbers, Sets are always subsets of $X$ unless otherwise stated.\end{defi}
%%%%%%%%%%%%%%%%%%%%%%%%%%%%%%%
\begin{defi} \cite{HLG}
Let $(X, d)$ be a cone metric space. A sequence $\{ x_{n} \}$ in $X$ is said to be convergent to $x\in X$ if for every $c \in E$ with $0<<c$ there is $k\in \mathbb{N}$ such that $d(x_{n}, x)<<c$, whenever for all $n>k$.
\end{defi}
%%%%%%%%%%%%%%%%%%%%%%%%%%%%%%%%%%%
We now prove the following results which will be needed in the sequel.
%%%%%%%%%%%%%%%%%%%%%%%%%%%%%%%%%%%
\begin{thm}
Let $E$ be a real Banach space with cone $P$. If $x_{0} \in intP$ and $c(> 0)\in \mathbb{R}$ then $cx_{0} \in intP$.
\end{thm}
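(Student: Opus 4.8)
The plan is to exploit the fact that, in a normed space, multiplication by a fixed nonzero scalar is a homeomorphism that carries open balls onto open balls, together with the closure of $P$ under nonnegative scalar multiplication coming from condition $(ii)$ in the definition of a cone.

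First I would use the hypothesis $x_{0}\in \mathrm{int}\,P$ to produce a radius $\delta>0$ such that the open ball $B(x_{0},\delta)=\{\,y\in E:\ \|y-x_{0}\|<\delta\,\}$ is contained in $P$. Next I would observe that for the fixed real number $c>0$ the map $y\mapsto cy$ sends $B(x_{0},\delta)$ exactly onto $B(cx_{0},c\delta)$: indeed $\|cy-cx_{0}\|=c\,\|y-x_{0}\|$ since $c>0$, so $\|y-x_{0}\|<\delta$ holds if and only if $\|cy-cx_{0}\|<c\delta$. Thus $c\,B(x_{0},\delta)=B(cx_{0},c\delta)$.

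Then I would invoke property $(ii)$ of the cone $P$ with the choice $a=c\ge 0$, $b=0$: for every $y\in P$ we get $cy=cy+0\cdot y\in P$, so $cP\subseteq P$. Combining this with the previous step gives
\[
B(cx_{0},c\delta)=c\,B(x_{0},\delta)\subseteq cP\subseteq P .
\]
Since $c>0$ and $\delta>0$ force $c\delta>0$, the ball $B(cx_{0},c\delta)$ is a genuine neighbourhood of $cx_{0}$ lying inside $P$, which is precisely the statement that $cx_{0}\in\mathrm{int}\,P$.

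There is no serious obstacle here; the argument is entirely routine. The only point requiring a little care is the elementary verification that scaling by $c>0$ multiplies the radius of a ball by exactly $c$ (so that the image is again an open ball and not merely an open set), and the equally elementary remark that $cP\subseteq P$ follows from the defining closure property of a cone.
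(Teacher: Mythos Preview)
Your proof is correct and follows essentially the same approach as the paper: both pick an open neighbourhood of $x_{0}$ inside $P$, observe that its image under the homeomorphism $y\mapsto cy$ is an open set containing $cx_{0}$, and use the cone axiom $(ii)$ to conclude that this image still lies in $P$. The only cosmetic difference is that the paper works with an arbitrary open set $U$ and simply asserts that $cU$ is open, whereas you specialise to a norm ball and verify explicitly that $c\,B(x_{0},\delta)=B(cx_{0},c\delta)$.
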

\begin{proof}
Let $x_{0} \in intP$. So there exists an open set $U$ such that $x_{0} \in U \subset P$. Since by definition $x,y \in P$ and $a,b \geq 0$ implies $ax+by \in P$, it follows that $cU \subset P$ for any real $c\geq 0$. Therefore $cx_{0} \in cU \subset P$. Now $cU$ is open and hence $cx_{0} \in intP$.
\end{proof}
%%%%%%%%%%%%%%%%%%%%%%%%%%%%%%%%%%%
\begin{thm}
Let $E$ be a real Banach space and $P$ be a cone in $E$. If $x_{0} \in P$ and $y_{0} \in intP$ then $x_{0} + y_{0} \in intP$.
\end{thm}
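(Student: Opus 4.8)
The plan is to mimic the structure of the proof of the preceding theorem, replacing the dilation $x \mapsto cx$ by the translation $x \mapsto x_0 + x$. First I would unpack the hypothesis $y_0 \in \mathrm{int}\,P$: by definition there is an open set $U \subseteq E$ with $y_0 \in U \subseteq P$. The goal is to produce an open set containing $x_0 + y_0$ and still sitting inside $P$.

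Next I would form the translate $x_0 + U := \{\, x_0 + u : u \in U \,\}$ and record two facts about it. First, $x_0 + y_0 \in x_0 + U$, since $y_0 \in U$. Second, $x_0 + U \subseteq P$: for any $u \in U$ we have $u \in P$ and $x_0 \in P$, so applying the cone axiom $(ii)$ with scalars $a = b = 1$ gives $x_0 + u = 1\cdot x_0 + 1\cdot u \in P$. Thus $x_0 + U$ is a subset of $P$ that contains the point of interest.

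The one genuine point to justify — the analogue of the ``$cU$ is open'' step in Theorem 2.1 — is that $x_0 + U$ is open in $E$. Here I would invoke that translation $T_{x_0}\colon E \to E$, $T_{x_0}(x) = x_0 + x$, is a homeomorphism of the normed space $E$ onto itself (it is a bijection, continuous, with continuous inverse $T_{-x_0}$), hence carries the open set $U$ to the open set $x_0 + U$. Equivalently, if $U$ contains the ball $B(y_0,\varepsilon)$ then $x_0 + U$ contains $B(x_0 + y_0,\varepsilon)$, which is an elementary estimate using the triangle inequality for the norm. This is the only part requiring any care, and it is routine; the cone structure only enters through axiom $(ii)$ in the previous paragraph.

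Combining the three observations, $x_0 + U$ is an open set with $x_0 + y_0 \in x_0 + U \subseteq P$, so $x_0 + y_0 \in \mathrm{int}\,P$, which completes the argument.
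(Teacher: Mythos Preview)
Your argument is correct and matches the paper's proof essentially line for line: both take an open $U$ with $y_0 \in U \subset P$, use that the translation $x \mapsto x_0 + x$ is a homeomorphism to see $x_0 + U$ is open, and invoke the cone axiom to get $x_0 + U \subset P$. The only cosmetic difference is that you spell out why $x_0 + U \subset P$ via axiom $(ii)$ with $a=b=1$, whereas the paper just says ``by definition''.
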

\begin{proof}
Let us consider the mapping $f:E \longrightarrow E$ be defined by $f(x)= x+ x_{0} $ for all $x \in E$. Clearly $f$ being a translation operator is a homeomorphism. Now since $y_{0} \in intP$, there exists an open set $U$ such that $y_{0} \in U \subset P$. Also $f(U)$ is open and hence $x_{0} + U$ is open. Also by definition $x_{0} + U \subset P$. Therefore $x_{0} + y_{0} \in x_{0} + U \subset P$. So $x_{0} + y_{0} \in intP$. 
\end{proof}
%%%%%%%%%%%%%%%%%%%%%%%%%%%%%%%%
\noindent \textbf{Corollary 2.1.} If $x_{0}, y_{0} \in intP$ then $x_{0} + y_{0} \in intP$.\\
%%%%%%%%%%%%%%%%%%%%%%%%%%%%%%%%%%%%%%
The following theorem is widely known.
%%%%%%%%%%%%%%%%%%%%%%%%%%%%%%%%%%%%%%
\begin{thm}
A real normed linear space is always connected.
\end{thm}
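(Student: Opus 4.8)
The plan is to prove the (formally stronger, but equally easy) statement that every real normed linear space $(X,\|\cdot\|)$ is path-connected, and then deduce connectedness from the standard topological fact that a path-connected space is connected. First I would fix two arbitrary points $x,y\in X$ and join them by the straight-line segment, i.e.\ consider the map $\gamma:[0,1]\longrightarrow X$ defined by $\gamma(t)=(1-t)x+ty$. This makes sense precisely because $X$ carries a linear structure, so that convex combinations of points of $X$ again lie in $X$.

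Next I would check that $\gamma$ is continuous. For $s,t\in[0,1]$ one has $\gamma(t)-\gamma(s)=(t-s)(y-x)$, hence $\|\gamma(t)-\gamma(s)\|=|t-s|\,\|y-x\|$, so $\gamma$ is Lipschitz and in particular continuous, with $\gamma(0)=x$ and $\gamma(1)=y$. Thus $\gamma$ is a path in $X$ from $x$ to $y$; since $x,y$ were arbitrary, $X$ is path-connected.

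Finally I would invoke the implication ``path-connected $\Rightarrow$ connected'': if $X=A\cup B$ were a separation of $X$ into disjoint nonempty open sets, then choosing $x\in A$, $y\in B$ and a path $\gamma$ as above, the sets $\gamma^{-1}(A)$ and $\gamma^{-1}(B)$ would form a separation of $[0,1]$, contradicting the connectedness of the interval $[0,1]\subset\mathbb{R}$. Hence no such separation exists and $X$ is connected.

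I do not expect any genuine obstacle here. The only place the vector-space (in fact merely convex) structure is used is in writing down the segment $\gamma$, and the only external fact needed is the connectedness of $[0,1]$; everything else is the routine estimate $\|\gamma(t)-\gamma(s)\|=|t-s|\,\|y-x\|$ together with elementary point-set topology.
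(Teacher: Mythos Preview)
Your proof is correct and entirely standard: the straight-line segment $\gamma(t)=(1-t)x+ty$ is Lipschitz, hence continuous, so $X$ is path-connected and therefore connected. There is nothing to compare, because the paper does not actually prove this theorem; it is stated as ``widely known'' and used without proof (only as a lemma for the subsequent result that $0\notin\operatorname{int}P$). Your argument supplies precisely the routine justification the paper omits.
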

With the help of above result we can deduce the following theorem.
%%%%%%%%%%%%%%%%%%%%%%%%%%%%%
\begin{thm} 
Let $E$ be a real Banach space with cone $P$. Then $0 \notin intP$.
\end{thm}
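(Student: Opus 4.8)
The plan is to argue by contradiction, using the connectedness of $E$ (Theorem 2.4) to show that if $0$ were an interior point of $P$, then $P$ would have to be the whole space $E$, contradicting axiom $(iii)$ in the definition of a cone (together with $P \neq \{0\}$, but the real obstruction is $(iii)$). So suppose $0 \in \operatorname{int} P$. I would first observe that this forces $\operatorname{int} P$ to be all of $E$: given any $x \in E$, write $x = \lim_{n} \tfrac{1}{n} x$... more directly, since $0 \in \operatorname{int} P$ there is an open ball $B(0,\delta) \subset P$, and for arbitrary $x \neq 0$ the point $\tfrac{\delta}{2\|x\|} x$ lies in $B(0,\delta) \subset P$; by Theorem 2.1 (scaling by the positive real $\tfrac{2\|x\|}{\delta}$ preserves membership in $\operatorname{int} P$, hence in $P$) we get $x \in P$. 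Thus $P = E$.

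Now I would derive the contradiction. Since $P = E$, for any $x \neq 0$ both $x \in P$ and $-x \in P$, so axiom $(iii)$ gives $x = 0$, a contradiction (such an $x$ exists because $E$, being a normed space that is not the zero space — indeed $P \neq \{0\}$ forces $E \neq \{0\}$ — has nonzero elements). This completes the argument. One could phrase the same contradiction via connectedness: if $0 \in \operatorname{int} P$ then both $P$ and its complement... but since $P$ is closed and we have just shown $\operatorname{int} P = E$, the complement of $P$ is open and its interior plus $P$ would disconnect $E$ unless $P = E$; I find the direct route above cleaner, so I would present that and perhaps remark that connectedness is the conceptual reason.

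The step I expect to be the crux is the passage from $0 \in \operatorname{int} P$ to $P = E$, and specifically the justification that scaling the small vector $\tfrac{\delta}{2\|x\|}x$ back up to $x$ stays inside $P$ — this is exactly where Theorem 2.1 is invoked, so the lemma structure of the paper is being used as intended. The rest is bookkeeping with the cone axioms. I would keep the write-up to a few lines: assume $0 \in \operatorname{int} P$, produce $B(0,\delta)\subset P$, show every $x$ lies in $P$ by scaling, invoke axiom $(iii)$ on a nonzero $x$ to reach $0 = x$, contradiction; hence $0 \notin \operatorname{int} P$.
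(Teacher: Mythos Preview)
Your argument is correct, but it takes a different route from the paper's. The paper invokes Theorem~2.2 (translation): assuming $0\in\operatorname{int}P$, for every $x\in P$ one has $x=0+x\in\operatorname{int}P$, so $P$ is open; since $P$ is also closed and nonempty, this contradicts the connectedness of $E$ (Theorem~2.3). You instead use Theorem~2.1 (scaling) to show directly that $P=E$, and then appeal to cone axiom~$(iii)$ on a nonzero vector. Your approach is more self-contained and bypasses connectedness entirely; the paper's approach is quicker once Theorems~2.2 and~2.3 are available, and it explains why the connectedness result was recorded just beforehand. One small point: the paper's argument tacitly requires $P\neq E$ (so that the clopen set is proper), and that is exactly the observation from axiom~$(iii)$ that you make explicit --- so in a sense your proof fills a gap the paper leaves implicit.
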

\begin{proof}
If possible let $0 \in intP$. Then for any $x \in P$ we have $0+x \in intP$ and hence $x \in intP$. Hence every element of $P$ is an interior point $P$ and consequently $P$ becomes an open set. Now $P$ is a non empty set which is both open and closed, This contradicts to the fact that E is connected.
\end{proof}
%%%%%%%%%%%%%%%%%%%%%%%%%%%
%%%%%%%%%%%%%%%%%%%%%%%%%%%%%%%%%%%%%%%%%%%%%%%%%%%%%%%%%%%%%%%%%%%%%%%%%%%%%%%%%%%%%%%%%%%%%%%%%%%%%%%%%%%%%%%%%%%%%%%%%%%%%%%%%%%
\section{\bf{Rough convergence in a cone metric space}}
%%%%%%%%%%%%%%%%%%%%%%%%%%%%%%%%%%%%%%%%%%%%%
%%%%%%%%%%%%%%%%%%%%%%%%%%%%%%%%%%%%%%%%%%

\noindent \textbf{Definition 3.1.}\cite{PHU} Let $(X,d)$ be a cone metric space. A sequence $\left\{x_{n} \right\}$ in $X$ is said to be $r$-convergent to $x$ for some $r \in E$ with $0<<r$ or $r=0$ if for every $\epsilon$ with $(0<<)\epsilon$ there exists a $k \in \mathbb{N}$ such that $d(x_{n}, x)<<r + \epsilon$ for all $n \geq k$.\\
%%%%%%%%%%%%%%%%%%%%%%%%%%%%%%%%%%%%%%%%%%
\indent Usually we denote it by $x_{n} \stackrel{r}{\rightarrow} x$, $r$ is said to be the roughness degree of rough convergence of $\left\{x_{n} \right\}$. It should be noted that when $r=0$ the rough convergence becomes the classical convergence of sequences in a cone metric space. If $\left\{x_{n} \right\}$ is $r$-convergent to $x$, then $x$ is said to be a $r$-limit point of $\left\{x_{n} \right\}$. From the next example we can observe that $r$-limit point of a sequence $\left\{x_{n} \right\}$ may not be unique. For some $r$ as defined above, the set of all $r$-limit points of a sequence $\left\{x_{n} \right\}$ is said to be the $r$-limit set of the sequence $\left\{x_{n} \right\}$ and we will denote it by $LIM^{r}x_{n}$. Therefore we can say $LIM^{r}x_{n}= \left\{x_{0} \in X : x_{n} \stackrel{r}{\rightarrow} x_{0}\right\}$.\\
%%%%%%%%%%%%%%%%%%%%%%%%%%%%%%%%%%%%%%%%%%%%%%%%

%%%%%%%%%%%%%%%%%%%%%%%%%%%%%%%%%%%%%%%%%%%%%%%%%%%
\textbf{Example 3.1.} Let $X= \mathbb{R} ^{2}$ and $E= \mathbb{R} ^{2}$ with $ P =\{ (x,y)\in E : x,y \geq 0 \}$. Now let us define $d: X\times X \longrightarrow E $ by $d(x,y)= ( \left\| x-y \right\|, \left\| x-y \right\| )$ where $\left\| x-y \right\|= \sqrt{(x_{1} -y_{1})^{2} + (x_{2} -y_{2})^{2}} $ for $x= (x_{1}, y_{1}) \in \mathbb{R} ^{2}$ and $y= (x_{2}, y_{2}) \in \mathbb{R} ^{2}$. Clearly $(X, d) $ is a cone metric space with $P$ as cone and which is not a metric space.\\
\indent Now let us consider a sequence $\{ x_{n} \}$ in $X$ where $ x_{n} = (1,1)$ if $n$ is odd and $ x_{n} = (2,2)$ if $n$ is even. We will show that $\{ x_{n} \}$ is not convergent in the usual sense of cone metric spaces. Clearly $\{ x_{n} \}$ can not converge to $(1,1)$ because choosing $\epsilon =( \frac {1} {2}, \frac {1} {2})$ we have infinitely many natural numbers that is the even natural numbers for which $d(x_{n}, x) << \epsilon$ does not hold, where $x=(1,1)$. Similarly we can show that  $\{ x_{n} \}$ can not converge to $(2,2)$. Now if we suppose that $\{ x_{n} \}$ converges to $p= (x_{0} , y_{0})$ in $X$, where $p\neq (1,1)$ and $p\neq (2,2)$. Let $\sqrt {(x_{0}- 1)^{2} + (y_{0}- 1)^{2}}= c _{1}$ and $\sqrt {(x_{0}-2 )^{2} + (y_{0}-2)^{2}}= c _{2}$. We denote the minimum of $c _{1}$ and $c _{2}$ by $c$ and let $\epsilon = (c,c)$. Then clearly $0 << \epsilon$, but there exists infinitely many terms of the sequence for which the relation $d(x_{n}, p)<< \epsilon$ does not hold.\\
\indent If we consider $r= (\sqrt{2}, \sqrt{2})$ then $\{ x_{n} \}$ is $r$-convergent to $s=(1,1)$. Because $d(x_{n}, s)$ equals to $(\sqrt{2}, \sqrt{2})$ if $n$ is even and $d(x_{n}, s)$ equals to $(0,0)$ if $n$ is odd. Hence $d(x_{n}, s) << r+ \epsilon$ for all $n \in \mathbb{N}$ and for every $(0<<)\epsilon \in E$.\\
%%%%%%%%%%%%%%%%%%%%%%%%%%%%%%%%%%%%%%%%%%%%%%%%%%%
%%%%%%%%%%%%%%%%%%%%%%%%%%%%%%%%%%%%%%%%%%%%%%%%%%%
\indent We now recall the definition of boundedness \cite{MNM} of a sequence in a metric space as follows.\\
A sequence $\left\{x_{n} \right\}$ in a metric space $(X,d)$ is said to be bounded if for any fixed $a \in X$, there exists $r>0$ such that $d(x_{n},a)< r$ for all $n \in \mathbb{N}$. So $d(x_{n},x_{m}) \leq d(x_{n},a) + d(x_{m},a) < 2r$ for all $n,m \in \mathbb{N}$. Using this idea we define boundedness of a sequence in a cone metric space as follows.\\
%%%%%%%%%%%%%%%%%%%%%%%%%%%%%%%%%%%%%%%%%%%%%%%%%%%
\noindent \textbf{Definition 3.2.}$($cf.\cite{MNM}$)$ A sequence $\left\{x_{n} \right\}$ in $(X,d)$ is said to be bounded if there exists a $(0<<)k \in E$ such that $d(x_{m}, x_{n}) << k$ for all $m,n \in \mathbb{N}$.\\
%%%%%%%%%%%%%%%%%%%%%%%%%%%%
\indent As in the case of a normed linear space it is also true in a cone metric space that if a sequence is bounded then the $r$-limit set of this sequence is non empty for some $r$ as defined above. The following theorem has been given in evidence of that.
%%%%%%%%%%Theorem 3.1%%%%%%%%%%%%%%%%%%%%%%%
\begin{thm}
If a sequence $\left\{x_{n} \right\}$ in $(X,d)$ is bounded then $LIM^{r}x_{n} \neq \phi$ for some $(0<<)r \in E$.
\end{thm}
\begin{proof}
Since $\left\{x_{n} \right\}$ is bounded there exists a $(0<<)p$ such that $d(x_{n},x_{m} ) << p$ for all $m,n \in \mathbb{N}$. Hence $p- d(x_{n},x_{m}) \in intP$ for all $m,n \in \mathbb{N}$. Therefore by theorem $2.2$ for every $(0<<)\epsilon \in E$ we have $[p- d(x_{n},x_{m})]+ \epsilon \in intP$ for all $m, n\in \mathbb{N}$. Hence $d(x_{n},x_{m})<< p+ \epsilon$ for all $m, n\in \mathbb{N}$. So for any $k \in \mathbb{N}$, $d(x_{n},x_{k})<< p+ \epsilon$ for all $n\in \mathbb{N}$. Therefore  $\left\{x_{n} \right\}$ is $p$-convergent to $x_{k}$ for all $k \in \mathbb{N}$. So $LIM^{p}x_{n} \neq \phi$.
\end{proof}
%%%%%%%%%%%%%%%%%%%%%%%%%%%%%%%%%%%%%%%%%%%%%%%%%%%%%
It should be noted that in the above theorem we can use any $(0<<)r \in E$ with $p<< r$ to show that $LIM^{r}x_{n} \neq \phi$.\\
%%%%%%%%%%%%%%%%%%%%%%%%%%%%%%%%%%%%%%%%%%%%%%%%%
\noindent \textbf{Remark 3.1.} In the example 3.1 let us consider $s= (1,1)$ and $s^{'}= (2,2)$. Then for $r= (\frac {1} {2 \sqrt{2}}, \frac {1} {2 \sqrt{2}})$, $LIM^{r} x_{n} = \phi$. Because if we suppose $\{ x_{n} \} $ is $r$-convergent to $a$ then for every $(0<<) \epsilon \in E$ there exists a $k \in \mathbb{N}$ such that $d(x_{n}, a)<< r+ \epsilon$ for all $n \geq k$. Let $\epsilon = (\frac {1} {8 \sqrt{2}}, \frac {1} {8 \sqrt{2}})$, so $r+ \epsilon = (\frac {5} {8 \sqrt{2}}, \frac {5} {8 \sqrt{2}})$. Since $x_{n}= s$ if $n$ is odd and $x_{n}= s^{'}$ if $n$ is even and $d(x_{n}, a)<< r+\epsilon$ for all $n\geq k$, clearly $d(s,a)<< r+ \epsilon$ and $d(s^{'},a)<< r+ \epsilon$. Now $d(s, s^{'}) \leq d(s,a) + d(s^{'},a)$ and hence $d(s, s^{'})<< 2(r+ \epsilon)$ that is $d(s, s^{'})<< (\frac {5} {4 \sqrt{2}}, \frac {5} {4 \sqrt{2}})$, which is a contradiction as $d(s, s^{'})=(\sqrt{2}, \sqrt{2})$. Therefore $LIM^{r} x_{n} = \phi$.\\ 

\indent In our previous theorem we have discussed about the choice of $r$ for which a bounded sequence in a cone metric space is $r$-convergent.
%%%%%%%%%%%%%%%Theorem 3.2%%%%%%%%%%%%%%%%%%%%%%%%%%%%%%%%
\begin{thm}
Every $r$-convergent sequence in a cone metric space $(X, d)$ is bounded.
\end{thm}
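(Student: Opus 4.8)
The plan is to imitate the classical proof that a convergent sequence is bounded, splitting the index set into a finite ``head'' and an infinite ``tail'', with one crucial modification: since a cone need not be a lattice, the finitely many individual bounds coming from the head must be amalgamated by \emph{addition} rather than by taking a maximum. So suppose $x_{n} \stackrel{r}{\rightarrow} x$ for some $r \in E$ with $0 << r$ or $r = 0$. First I would fix one element $\epsilon_{0} \in E$ with $0 << \epsilon_{0}$ (such an element exists because the very definition of $r$-convergence presupposes $intP \neq \phi$) and apply the definition with $\epsilon = \epsilon_{0}$ to get a $k_{0} \in \mathbb{N}$ such that $d(x_{n}, x) << r + \epsilon_{0}$ for all $n \geq k_{0}$.

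Next I would handle the finitely many terms $x_{1}, \dots, x_{k_{0}-1}$. For each such index $i$, Theorem $2.2$ applied to $d(x_{i},x) \in P$ and $\epsilon_{0} \in intP$ gives $d(x_{i}, x) + \epsilon_{0} \in intP$, i.e. $d(x_{i}, x) << d(x_{i}, x) + \epsilon_{0}$. Now put
\[
b \;=\; (r + \epsilon_{0}) + \sum_{i=1}^{k_{0}-1}\bigl(d(x_{i}, x) + \epsilon_{0}\bigr),
\]
with the convention that the sum is $0$ when $k_{0} = 1$. Each summand lies in $P$ and at least one of them (namely $r + \epsilon_{0}$) lies in $intP$, so $b \in intP$ by Corollary $2.1$ together with Theorem $2.2$. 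The point is that $b$ dominates every bound produced so far: $b$ minus any single summand is again a sum with at least one term in $intP$ and the rest in $P$, so Theorem $2.2$ yields $r + \epsilon_{0} << b$ and $d(x_{i}, x) + \epsilon_{0} << b$ for each $i < k_{0}$. Hence, using transitivity of $<<$ (which follows from Corollary $2.1$), I conclude $d(x_{n}, x) << b$ for \emph{all} $n \in \mathbb{N}$: for $n \geq k_{0}$ directly, and for $n < k_{0}$ via $d(x_{n},x) << d(x_{n},x)+\epsilon_{0} << b$.

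Finally I would convert a bound on $d(x_{n}, x)$ into a bound on $d(x_{m}, x_{n})$. By the triangle inequality $(d3)$ we have $d(x_{m}, x_{n}) \leq d(x_{m}, x) + d(x_{n}, x)$, and since $d(x_{m}, x) << b$ and $d(x_{n}, x) << b$, Corollary $2.1$ gives $d(x_{m}, x) + d(x_{n}, x) << 2b$; combining ``$\leq$'' with ``$<<$'' through Theorem $2.2$ then gives $d(x_{m}, x_{n}) << 2b$ for all $m, n \in \mathbb{N}$, where $2b \in intP$ by Theorem $2.1$. Taking $k = 2b$ in Definition $3.2$ shows $\{x_{n}\}$ is bounded. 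I do not expect a genuine obstacle here; the only care required is that no lattice-type operation (maxima, cancellation of inequalities) is legitimate in an arbitrary ordered Banach space, so every step has to be carried out purely through addition and the three auxiliary facts of Section $2$, and the degenerate case $k_{0} = 1$ (empty head) should be recorded explicitly.
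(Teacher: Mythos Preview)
Your proposal is correct and follows essentially the same route as the paper: fix one $\epsilon_{0}$, split into head and tail, replace the illegitimate ``maximum'' of the head distances by their \emph{sum}, obtain a single $<<$-bound on all $d(x_{n},x)$, and finish with the triangle inequality to get $d(x_{m},x_{n}) << 2(\text{bound})$. The paper simply sets $M = \sum_{n=1}^{k} d(x_{n},x)$ and uses $M + r + \epsilon$ as the common bound, whereas you pad each head term with an extra $\epsilon_{0}$; this is a cosmetic difference only, and your explicit flagging of the degenerate case $k_{0}=1$ is in fact more careful than the paper.
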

\begin{proof}
Let $\left\{x_{n} \right\}$ be a $r$-convergent sequence in $(X, d)$ and $r$-convergent to $x$. So for every $(0<<)\epsilon \in E$, there exists a $k \in \mathbb{N}$ such that $d(x_{n}, x) << r+\epsilon$ for all $n \geq k \longrightarrow (*)$. Let $M=\sum _{n=1} ^{k} d(x_{n}, x)$. Clearly $0\leq M$ and using theorem 2.2 we have $0 << M+ \epsilon$. For $n<k$, we have $M- d(x_{n}, x) \in P$. Again since $0<< r+\epsilon$, we have $[M- d(x_{n}, x)]+ (r+ \epsilon) \in intP$, that is, $(M+r+ \epsilon) -d(x_{n}, x) \in intP$. Hence $d(x_{n}, x)<< (M+r+ \epsilon)$ for all $n<k \longrightarrow (i)$.\\
We have by $(*)$, $(r+ \epsilon) - d(x_{n}, x) \in intP$ for all $n \geq k$ and also $M \in P$. Hence $M+[(r+ \epsilon) - d(x_{n}, x) ] \in intP$ for all $n \geq k$. Therefore, $d(x_{n}, x) << M+(r+ \epsilon)$ for all $n \geq k \longrightarrow (ii)$. Hence from $(i)$ and $(ii)$ we can write $d(x_{n}, x) << M+(r+ \epsilon)$ for all $n \in \mathbb{N}$.\\ 
\indent Now for $i,j \in \mathbb{N}$, we have $d(x_{i}, x_{j}) \leq d(x_{i}, x) + d(x_{j}, x)$, that is, $[d(x_{i}, x) + d(x_{j}, x)] - d(x_{i}, x_{j}) \in P \longrightarrow (iii)$. Also $(M+r+ \epsilon) - d(x_{i}, x) \in intP$ and $(M+r+ \epsilon) - d(x_{j}, x) \in intP$ and hence, $2(M+r+ \epsilon) - [d(x_{i}, x) + d(x_{j}, x)] \in intP \longrightarrow (iv)$. From $(iii)$ and $(iv)$ we have $2(M+r+ \epsilon) - d(x_{i}, x_{j}) \in intP$ and so $d(x_{i}, x_{j}) << 2(M+r+ \epsilon)$. Hence $\left\{x_{n} \right\}$ is bounded.
\end{proof}
%%%%%%%%%%%Theorem 3.3%%%%%%%%%%%%%%%%%%%%%%%%%%%
\begin{thm}
Let $\left\{x_{n} \right\}$ and $\left\{y_{n} \right\}$ be two sequences in a cone metric space $(X, d)$ and let $\left\{y_{n} \right\}$ converges to $y \in X$. If there exists a $(0<<)r \in E$ such that $d(x_{i}, y_{i}) \leq r$ for all $i \in \mathbb{N}$. Then $\left\{x_{n} \right\}$ $r$-converges to $y$.
\end{thm}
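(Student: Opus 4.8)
The plan is to estimate $d(x_n,y)$ by the triangle inequality $(d3)$, splitting it through the auxiliary point $y_n$, and then to play the two given controls against each other: the uniform bound $d(x_i,y_i)\le r$ on one side and the ordinary convergence $y_n\to y$ on the other. Concretely, fix an arbitrary $\epsilon\in E$ with $0\ll\epsilon$; our goal is to produce $k\in\mathbb{N}$ with $d(x_n,y)\ll r+\epsilon$ for all $n\ge k$.

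First I would invoke the definition of convergence in a cone metric space applied to $\{y_n\}$ with the choice $c=\epsilon$: since $0\ll\epsilon$, there is $k\in\mathbb{N}$ such that $d(y_n,y)\ll\epsilon$, i.e. $\epsilon-d(y_n,y)\in\mathrm{int}P$, for all $n\ge k$. Next I would record the hypothesis $d(x_n,y_n)\le r$ in the form $r-d(x_n,y_n)\in P$, valid for every $n$, and the triangle inequality in the form $[d(x_n,y_n)+d(y_n,y)]-d(x_n,y)\in P$, again for every $n$.

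The core of the argument is then a two-step addition in the cone, for $n\ge k$. Adding the element $r-d(x_n,y_n)\in P$ to the element $\epsilon-d(y_n,y)\in\mathrm{int}P$ and using Theorem 2.2, I get $(r+\epsilon)-[d(x_n,y_n)+d(y_n,y)]\in\mathrm{int}P$. Adding this to the triangle-inequality element $[d(x_n,y_n)+d(y_n,y)]-d(x_n,y)\in P$ and applying Theorem 2.2 once more yields $(r+\epsilon)-d(x_n,y)\in\mathrm{int}P$, that is, $d(x_n,y)\ll r+\epsilon$ for all $n\ge k$. Since $\epsilon$ was an arbitrary element with $0\ll\epsilon$, this is exactly the statement that $\{x_n\}$ is $r$-convergent to $y$.

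The only real point of care—rather than a genuine obstacle—is that subtraction and the strict order $\ll$ do not combine as freely as in a normed space: one cannot simply ``subtract inequalities.'' This is why the proof is organised entirely as a sequence of additions of elements of $P$ and $\mathrm{int}P$, each step justified by Theorem 2.2 (equivalently Corollary 2.1 when both summands lie in $\mathrm{int}P$); getting the bookkeeping of which term lies in $P$ versus $\mathrm{int}P$ right is the part that needs attention. Everything else is the routine triangle-inequality estimate.
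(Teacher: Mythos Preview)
Your proposal is correct and follows essentially the same route as the paper's own proof: fix $\epsilon$ with $0\ll\epsilon$, use convergence of $\{y_n\}$ to get $\epsilon-d(y_n,y)\in\mathrm{int}P$ for $n\ge k$, combine with $r-d(x_n,y_n)\in P$ via Theorem~2.2, and then add the triangle-inequality term to conclude $(r+\epsilon)-d(x_n,y)\in\mathrm{int}P$. The bookkeeping of $P$ versus $\mathrm{int}P$ and the two applications of Theorem~2.2 match the paper's argument step for step.
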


\begin{proof}
Let $(0<<) \epsilon $ be pre-assigned. Since $\left\{y_{n} \right\}$ converges to $y$ there exists a $k \in \mathbb{N}$ such that $d(y_{n},y) << \epsilon$ for all $n \geq k$ and hence $\epsilon - d(y_{n},y) \in intP$ for all $n \geq k$. Also for all $n \geq k$, we have $d(x_{n}, y_{n}) \leq r$, so $r -d(x_{n}, y_{n}) \in P$ for all $n \geq k$. Therefore by theorem 2.2 $(r+ \epsilon)-[d(x_{n}, y_{n})+d(y_{n},y)] \in intP$ for all $n \geq k$. Again since $d(x_{n}, y) \leq d(x_{n}, y_{n})+ d(y_{n},y)$ for all $n \in \mathbb{N}$, $[d(x_{n}, y_{n})+ d(y_{n},y)]- d(x_{n}, y) \in P$ for all $n \in \mathbb{N}$. So for all $n \geq k$ we have $[(r+ \epsilon)-(d(x_{n}, y_{n})+d(y_{n},y))] + [(d(x_{n}, y_{n})+ d(y_{n},y))- d(x_{n}, y)] = (r+ \epsilon) - d(x_{n}, y) \in intP$ for all $n \geq k$. Therefore $d(x_{n}, y) << r+ \epsilon$ for all $n \geq k$ and hence $\left\{x_{n} \right\}$ $r$-converges to $y$.
\end{proof}
%%%%%%%%%%%%%%%%%%%%%%%%%%%%%%%%%%%%%%
It has been seen in \cite{PHU} that the diameter of a $r$-limit set of a sequence in a normed linear space is not greater then 2$r$. We find out similar kind of property of a sequence in a cone metric space regarding rough convergence as follows.  
%%%%%%%%%%%%%%Theorem 3.4%%%%%%%%%%%%%%%%%%%%%%%%%%%%%
\begin{thm}
Let $\left\{x_{n} \right\}$ be a sequence in $(X, d)$. Then there does not exists elements $y,$ $z$ in $LIM^{r}x_{n}$ such that $mr< d(x, z)$, where m is a real number greater then $2$.
\end{thm}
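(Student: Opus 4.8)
The plan is a proof by contradiction resting on the triangle inequality $(d3)$ together with Theorems $2.1$, $2.2$ and $2.4$; it is the cone-metric shadow of the normed-space fact that the diameter of $LIM^{r}x_{n}$ does not exceed $2r$. (I read the displayed $d(x,z)$ as $d(y,z)$, since $x$ does not occur in the hypothesis.) I will assume $0<<r$; the residual case $r=0$ is just the uniqueness of an ordinary limit in a cone metric space and carries no roughness. So suppose $y,z\in LIM^{r}x_{n}$ and that $mr<d(y,z)$ for some real $m>2$; in particular $d(y,z)-mr\in P$.

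First I would fix an arbitrary $\epsilon$ with $0<<\epsilon$. By Definition $3.1$ there are $k_{1},k_{2}\in\mathbb{N}$ with $d(x_{n},y)<<r+\epsilon$ for $n\ge k_{1}$ and $d(x_{n},z)<<r+\epsilon$ for $n\ge k_{2}$; set $k=\max\{k_{1},k_{2}\}$. For $n\ge k$, Corollary $2.1$ applied to the two members $r+\epsilon-d(y,x_{n})$ and $r+\epsilon-d(x_{n},z)$ of $intP$ gives $2(r+\epsilon)-[d(y,x_{n})+d(x_{n},z)]\in intP$, while $(d3)$ gives $[d(y,x_{n})+d(x_{n},z)]-d(y,z)\in P$. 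Adding these two memberships by Theorem $2.2$ yields $2(r+\epsilon)-d(y,z)\in intP$, i.e. $d(y,z)<<2r+2\epsilon$.

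Next, combining $d(y,z)-mr\in P$ with $2r+2\epsilon-d(y,z)\in intP$ again through Theorem $2.2$ gives $2r+2\epsilon-mr\in intP$, that is, $2\epsilon-(m-2)r\in intP$, and this holds for every $\epsilon$ with $0<<\epsilon$. The last step is to choose $\epsilon$ well. Since $m>2$ and $0<<r$, Theorem $2.1$ shows $\epsilon_{0}:=\frac{m-2}{4}\,r\in intP$, so, taking $\epsilon=\epsilon_{0}$, the element $2\epsilon_{0}-(m-2)r=-\frac{1}{2}(m-2)r$ lies in $intP\subseteq P$. But $\frac{1}{2}(m-2)r\in intP\subseteq P$ as well, by Theorem $2.1$, so cone axiom $(iii)$ forces $\frac{1}{2}(m-2)r=0$, which contradicts Theorem $2.4$ because $\frac{1}{2}(m-2)r\in intP$. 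Hence no such pair $y,z$ can exist.

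The one genuinely delicate point is this passage from ``$d(y,z)<<2r+2\epsilon$ for all $\epsilon>>0$'' to a contradiction: in a normed linear space one would simply let $\epsilon\to 0$, but a general cone need not be normal, so instead the contradiction has to be squeezed out by substituting the single admissible value $\epsilon_{0}=\frac{m-2}{4}r$, whose admissibility ($0<<\epsilon_{0}$) is exactly where Theorem $2.1$ and the hypothesis $0<<r$ enter. Everything else is routine ``$P+intP\subseteq intP$'' bookkeeping via Theorem $2.2$.
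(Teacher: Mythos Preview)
Your argument is correct and follows essentially the same route as the paper: contradiction via the triangle inequality, the rule $P+intP\subset intP$, and a specific choice of $\epsilon$ that collides with $0\notin intP$. The paper's endgame is marginally cleaner: it applies the definition with $\epsilon/2$ so as to arrive at $\epsilon-(m-2)r\in intP$, and then takes $\epsilon=(m-2)r$ to obtain $0\in intP$ directly, avoiding your detour through cone axiom $(iii)$; but this is a cosmetic difference, not a substantive one.
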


\begin{proof}
If possible let there exists elements $y, z \in LIM^{r}x_{n}$ such that $mr < d(y,z)$ and $m>2$. Let $(0<<)\epsilon$ be arbitrary.
Since $y, z \in LIM^{r}x_{n}$, there exists a $i \in \mathbb{N}$ such that $d(x_{i}, y)<< r + \frac {\epsilon}  {2}$, that is $(r +
\frac {\epsilon}  {2}) - d(x_{i}, y) \in intP \longrightarrow  (i)$ and  $d(x_{i}, z)<< r + \frac {\epsilon}  {2}$, that is $(r + \frac
{\epsilon}  {2}) - d(x_{i}, z) \in intP \longrightarrow  (ii)$. Hence from $(i)$ and $(ii)$ we can write $(2r + \epsilon)- [d(x_{i},
y) + d(x_{i}, z)] \in intP \longrightarrow (iii)$.\\
\indent Now $ d(y, z) \leq d(x_{i}, y) + d(x_{i}, z)$, hence $[d(x_{i}, y) + d(x_{i}, z)] - d(y, z) \in intP \longrightarrow (iv)$.
So from $(iii)$ and $(iv)$, $(2r + \epsilon) - d(y, z) \in intP \longrightarrow (v)$. Again $d(y, z) -mr \in P \longrightarrow
(vi)$. Hence from $(v)$ and $(vi)$ we can write $(2r + \epsilon) -mr \in intP$ that is $\epsilon - r(m-2) \in intP$. This is true for any
$0<< \epsilon$. So choosing $\epsilon = r(m-2)$, we have $0 \in intP$. This is a contradiction and the result follows.
\end{proof}
%%%%%%%%%%%%%%%%%%%Theorem 3.5%%%%%%%%%%%%%%%%%%%%%%%%%%%
\begin{thm}
Let $\{ x_{n} \}$ be $r_{1}$-convergent to $x$ in $(X, d)$. Then $\{x_{n} \}$ is also $r_{2}$-convergent to $x$ in $(X, d)$ for $r_{1}<
r_{2}$.
\end{thm}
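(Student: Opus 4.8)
The plan is to obtain the $r_2$-convergence directly from the $r_1$-convergence, using the relation $r_1 < r_2$ only through the fact that $r_2 - r_1 \in P$, and then invoking Theorem 2.2 to upgrade an interior-point membership. First I would fix an arbitrary $\epsilon \in E$ with $0 << \epsilon$. Since $\{x_n\}$ is $r_1$-convergent to $x$, Definition 3.1 furnishes a $k \in \mathbb{N}$ such that $d(x_n, x) << r_1 + \epsilon$ for all $n \geq k$; equivalently, $(r_1 + \epsilon) - d(x_n, x) \in intP$ for every $n \geq k$.

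The next step is to feed this into Theorem 2.2. From $r_1 < r_2$ we have $r_2 - r_1 \in P$. Applying Theorem 2.2 with $x_0 = r_2 - r_1 \in P$ and $y_0 = (r_1 + \epsilon) - d(x_n, x) \in intP$ yields $(r_2 - r_1) + \big[(r_1 + \epsilon) - d(x_n, x)\big] = (r_2 + \epsilon) - d(x_n, x) \in intP$ for all $n \geq k$, that is, $d(x_n, x) << r_2 + \epsilon$ for all $n \geq k$. Since $\epsilon$ was an arbitrary element of $E$ with $0 << \epsilon$, this is precisely the assertion that $\{x_n\}$ is $r_2$-convergent to $x$, completing the proof.

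There is no substantial obstacle here; the argument is essentially a one-line consequence of Theorem 2.2 once the hypothesis is rewritten as an interior-point statement. The only point deserving a word of care is that for the conclusion to be meaningful $r_2$ should itself be an admissible roughness degree (so $0 << r_2$ or $r_2 = 0$), which I would take as part of the reading of $r_1 < r_2$; and I would remark that the same computation goes through verbatim with $r_1 \leq r_2$ in place of $r_1 < r_2$, the degenerate case $r_2 = r_1$ being trivial.
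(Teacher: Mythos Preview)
Your proof is correct and is precisely the argument the paper has in mind: the paper itself omits the proof as ``obvious,'' and the one-line computation you give---rewriting $r_1$-convergence as $(r_1+\epsilon)-d(x_n,x)\in intP$, adding $r_2-r_1\in P$, and invoking Theorem~2.2---is exactly the pattern used repeatedly elsewhere in the paper (e.g.\ Theorems~3.2, 3.3, 3.7). Your closing remarks on the admissibility of $r_2$ and the extension to $r_1\leq r_2$ are also apt.
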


The proof is obvious and so is omitted.\\
%%%%%%%%%%%%%Corollary 3.1%%%%%%%%%%%%%%%%%%%%%%%%%%%%%
\noindent \textbf{Corollary 3.1.} Let $\{ x_{n} \}$ be $r_{1}$-convergent to $x$ in $(X, d)$ and $r_{1} < r_{2}$ for some $0<<r_{2}$. Then $LIM^{r_{1}} x_{n} \subset LIM^{r_{2}} x_{n} $.\\
%%%%%%%%%%%%%%%%Definition 3.3%%%%%%%%%%%%%%%%%%%%%%%%%%%%%%%%
\noindent \textbf{Definition 3.3.} Let $\{ x_{n} \}$ be a sequence in a cone metric space $(X, d)$, a point $c \in X$ is said to be a cluster point of $\{ x_{n} \}$ if for every $(0<<)\epsilon \in E$ and for every $k \in \mathbb{N}$, there exists a $k_{1} \in \mathbb{N}$ such that $k_{1} > k$ with $d(x_{k_{1}}, c) << \epsilon$.\\
%%%%%%%%%%%%%%%%%%%%Definition 3.3%%%%%%%%%%%%%%%%%%%%%%%%%%%%%%
 \indent For $0<<r$ and a fixed $y \in X$, we define the sets $\overline {B_{r}(y)}$ and $B_{r}(y)$, the closed and open spheres respectively centred at $y$ with radius $r$ as follows:\\
$\overline {B_{r}(y)}= \{ x\in X : d(x,y) \leq r \}$ and $B_{r}(y) = \{ x\in X : d(x,y) << r \}$.
%%%%%%%%%%%%%%%%%%%%%Theorem 3.6%%%%%%%%%%%%%%%%%%%%%%%%%%
\begin{thm}
Let $(X,d)$ be a cone metric space, $c \in X$ and $0<< r$ be such that for any $x \in X$, either $d(x,c) \leq r$ or $r<< d(x,c)$. If $c$ is a cluster point of a sequence $\{x_{n} \}$ then $LIM^{r}x_{n} \subset \overline {B_{r}(c)}$.
\end{thm}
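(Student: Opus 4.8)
The plan is to show that every $y \in LIM^{r}x_{n}$ must satisfy $d(y,c) \leq r$; the dichotomy assumption on $c$ and $r$ is then exactly what converts the negation ``$d(y,c)\not\leq r$'' into the single usable alternative $r << d(y,c)$. So I would argue by contradiction: suppose $y \in LIM^{r}x_{n}$ but $d(y,c)\not\leq r$. By hypothesis $r << d(y,c)$, hence $p := d(y,c) - r \in intP$.

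Next I would fix an arbitrary $\epsilon$ with $0 << \epsilon$ and couple the two hypotheses at a common index. Since $\{x_{n}\}$ is $r$-convergent to $y$, there is $k \in \mathbb{N}$ with $d(x_{n},y) << r + \epsilon$ for all $n \geq k$; since $c$ is a cluster point of $\{x_{n}\}$ (Definition 3.3), there is $k_{1} > k$ with $d(x_{k_{1}},c) << \epsilon$. The triangle inequality gives $d(y,c) \leq d(y,x_{k_{1}}) + d(x_{k_{1}},c)$, and repeated use of Theorem 2.2 and Corollary 2.1 (a vector of $P$ plus a vector of $intP$ lies in $intP$) upgrades this to $d(y,c) << (r+\epsilon) + \epsilon = r + 2\epsilon$. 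Thus $d(y,c) << r + 2\epsilon$ for every $\epsilon$ with $0 << \epsilon$.

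Finally I would exploit the freedom in $\epsilon$. Because $p \in intP$, Theorem 2.1 (scaling by a positive real keeps a point in $intP$) shows $\frac{1}{2}p \in intP$, so $\epsilon = \frac{1}{2}p$ is an admissible choice. Substituting it into $d(y,c) << r + 2\epsilon$ yields $d(y,c) << r + p = d(y,c)$, i.e. $d(y,c) - d(y,c) = 0 \in intP$, contradicting Theorem 2.5. Hence $d(y,c) \leq r$, that is $y \in \overline{B_{r}(c)}$, and since $y$ was an arbitrary element of $LIM^{r}x_{n}$ the inclusion follows.

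The only genuinely delicate point is the bookkeeping with the cone order: each passage from ``$\leq$'' to ``$<<$'' must be justified through Theorem 2.2 and Corollary 2.1 rather than by analogy with the real line, and one must notice that the dichotomy hypothesis is precisely what makes the strict inequality $r << d(y,c)$ available after negating $d(y,c)\leq r$. Everything else is a routine two-term triangle estimate combined with the clean choice $\epsilon = \frac{1}{2}p$.
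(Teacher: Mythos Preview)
Your proof is correct and follows essentially the same route as the paper: both argue by contradiction, take $\epsilon = \tfrac{1}{2}(d(y,c)-r)$, combine the $r$-limit and cluster-point conditions at a common index $k_{1}$ via the triangle inequality, and arrive at $0 \in intP$. The only cosmetic differences are that the paper fixes this $\epsilon$ at the outset and packages the contradiction as ``$B_{r+\epsilon}(y)\cap B_{\epsilon}(c)=\emptyset$ yet $x_{k_{1}}$ lies in both balls,'' and that the result $0\notin intP$ you invoke is Theorem~2.4 in the paper, not Theorem~2.5.
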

\begin{proof}
Let $y \in LIM^{r} x_{n}$ but $y \notin \overline {B_{r}(c)}$. So $r<< d(y,c)$. Let $\epsilon ^{'} = d(y, c)- r (\in intP)$ and so $d(y,
c)=r+\epsilon ^{'}$, where $0<< \epsilon ^{'}$. Let $\epsilon =\frac {\epsilon ^{'}}{2}$ and so we can write $d(y, c)= r+ 2 \epsilon$.
Then $B_{r+ \epsilon} (y) \cap B_{\epsilon} (c) = \phi$. Otherwise if $p \in B_{r+ \epsilon} (y) \cap B_{\epsilon}
(c)$ then $d(p,y)<< r+ \epsilon$ and $d(p,c)<< \epsilon$ and hence $(r+ \epsilon)- d(p,y)\in intP$ and $\epsilon - d(p,c) \in intP$. So we
have $(r+ 2 \epsilon)- [d(p,y)+d(p,c)] \in intP \longrightarrow (i)$. Again $d(y,c) \leq d(y,p)+ d(p,c)$ that is $[d(y,p)+ d(p,c)]-
d(y,c) \in P \longrightarrow (ii)$. Hence from $(i)$ and $(ii)$ we can write $(r+ 2 \epsilon) -d(y,c)=0 \in intP$, which is a contradiction as $0 \notin intP$. Therefore $B_{r+ \epsilon} (y) \cap B_{\epsilon} (c) = \phi$. But since $y \in LIM^{r}x_{n}$, for $0<< \epsilon$ there exists a $k_{0} \in \mathbb{N}$ such that $d(x_{n} , y) << r+ \epsilon$ for all $n \geq k_{0}$. Again since $c$ is a cluster point of $\{ x_{n} \}$, for $0<< \epsilon $ and for $k_{0} \in \mathbb{N}$, there exists a $k_{1}\in \mathbb{N}$, with $k_{1} > k_{0}$ such that $d(x_{k_{1}} , c)<< \epsilon$. So $x_{k_{1}} \in B_{\epsilon}(c)$. Also $d(x_{k_{1}}, y) << r + \epsilon$. So $x_{k_{1}} \in B_{r+ \epsilon} (y)$. Thus $x_{k_{1}} \in B_{r+ \epsilon} (y) \cap B_{\epsilon} (c)$. Which is a contradiction . Hence $y \in \overline{B_{r}(c)}$.
\end{proof}
%%%%%%%%%%%%%%%%%%%%%%%%%%%%%%%%%%%%%%%%%%%%%%%%%%%
%%%%%%%%%%%%%%%%%%%%%%%Theorem 3.7%%%%%%%%%%%%%%%%%%%%%%%%%%%%%%%%
\begin{thm}
Let $\left\{ x_{n} \right\}$ be a sequences in $(X,d)$ and $\left\{ y_{n} \right\}$ be a convergent sequence in $LIM^{r} x_{n}$ converging to $y_{0}$. Then $y_{0}$ must belongs to $LIM^{r} x_{n}$.
\end{thm}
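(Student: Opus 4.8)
This statement asserts that $LIM^{r}x_{n}$ is closed, so the plan is to verify directly from Definition 3.1 that $\{x_{n}\}$ is $r$-convergent to $y_{0}$. Fix an arbitrary $\epsilon\in E$ with $0<<\epsilon$. By Theorem 2.1, $\frac{\epsilon}{2}\in intP$, so $0<<\frac{\epsilon}{2}$ and we may freely use $\frac{\epsilon}{2}$ as a tolerance. First I would use the convergence of $\{y_{n}\}$ to $y_{0}$ to pick a single index $m$ with $d(y_{m},y_{0})<<\frac{\epsilon}{2}$, i.e. $\frac{\epsilon}{2}-d(y_{m},y_{0})\in intP$. Since this particular $y_{m}$ lies in $LIM^{r}x_{n}$, the sequence $\{x_{n}\}$ is $r$-convergent to $y_{m}$, so applying Definition 3.1 with tolerance $\frac{\epsilon}{2}$ yields a $k\in\mathbb{N}$ with $d(x_{n},y_{m})<<r+\frac{\epsilon}{2}$, that is $\big(r+\frac{\epsilon}{2}\big)-d(x_{n},y_{m})\in intP$, for all $n\geq k$.

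Next I would combine these two facts additively. By Corollary 2.1, the sum of two interior points lies in $intP$, so for all $n\geq k$,
\[
\Big[\big(r+\tfrac{\epsilon}{2}\big)-d(x_{n},y_{m})\Big]+\Big[\tfrac{\epsilon}{2}-d(y_{m},y_{0})\Big]=(r+\epsilon)-\big[d(x_{n},y_{m})+d(y_{m},y_{0})\big]\in intP .
\]
On the other hand the triangle inequality $(d3)$ gives $d(x_{n},y_{0})\leq d(x_{n},y_{m})+d(y_{m},y_{0})$, hence $\big[d(x_{n},y_{m})+d(y_{m},y_{0})\big]-d(x_{n},y_{0})\in P$. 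Adding this element of $P$ to the interior point above and invoking Theorem 2.2 gives $(r+\epsilon)-d(x_{n},y_{0})\in intP$, i.e. $d(x_{n},y_{0})<<r+\epsilon$ for all $n\geq k$. Since $\epsilon$ was arbitrary with $0<<\epsilon$, this shows $x_{n}\stackrel{r}{\rightarrow}y_{0}$, so $y_{0}\in LIM^{r}x_{n}$.

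I do not expect any serious obstacle here; the argument is a routine ``halving the epsilon'' triangle-inequality estimate, transplanted to the cone setting. The only points requiring care are bookkeeping ones: making sure $\frac{\epsilon}{2}$ is legitimately an interior point (handled by Theorem 2.1), and consistently translating each strict inequality $<<$ into the membership statement $(\,\cdot\,)-(\,\cdot\,)\in intP$ before adding, so that Theorem 2.2 and Corollary 2.1 can be applied. The fact that a single fixed $y_{m}$ suffices — rather than needing the whole tail of $\{y_{n}\}$ — is what keeps the proof short.
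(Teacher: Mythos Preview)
Your proposal is correct and follows essentially the same route as the paper's proof: both pick a single $y_{m}$ close to $y_{0}$ within $\frac{\epsilon}{2}$, use $y_{m}\in LIM^{r}x_{n}$ to get $d(x_{n},y_{m})<<r+\frac{\epsilon}{2}$ for large $n$, and then combine via the triangle inequality and Theorem~2.2/Corollary~2.1 to conclude $d(x_{n},y_{0})<<r+\epsilon$. Your explicit invocation of Theorem~2.1 for $\frac{\epsilon}{2}\in intP$ and the careful translation of $<<$ into $intP$-membership are exactly the bookkeeping the paper's argument uses implicitly.
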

\begin{proof}
Let $(0<<)\epsilon$ be preassigned. Since $\left\{ y_{n} \right\}$ converges to $y_{0}$, for $(0<<)\epsilon$ there exists a $k_{1} \in \mathbb{N}$ such that $d(y_{n}, y_{0}) << \frac { \epsilon}{2}$ for all $n \geq k_{1} \longrightarrow (i)$. Now let us choose $y_{m} \in LIM^{r} x_{n}$ with $m > k_{1}$. Then there exists a $k_{2} \in \mathbb{N}$ such that $d(x_{n}, y_{m}) << r+ \frac { \epsilon}{2}$ for all $n \geq k_{2} \longrightarrow (ii)$. Also for all $n \in \mathbb{N}$ we have, $d(x_{n}, y_{0}) \leq d(x_{n}, y_{m}) + d(y_{m}, y_{0})$. Hence we have $[d(x_{n}, y_{m}) + d(y_{m}, y_{0})] - d(x_{n}, y_{0}) \in P $ for all $n \in \mathbb{N} \longrightarrow (iii)$. Since $m > k_{1}$, by $(i)$ we can write $\frac { \epsilon}{2} -d(y_{m}, y_{0}) \in int P \longrightarrow (iv)$ and for all $n \geq k_{2}$ from $(ii)$ we have $(r+ \frac { \epsilon}{2}) - d(x_{n}, y_{m}) \in int P \longrightarrow (v)$.

Therefore for all $n \geq k_{2}$, by $(iv)$ and $(v)$ we can write $(r+ \epsilon) - [d(x_{n}, y_{m}+d(y_{m}, y_{0}) ] \in int P \longrightarrow (vi)$. Hence for all $n \geq k_{2}$, from $(iii)$ and $(vi)$ we have $(r+ \epsilon) - d(x_{n},y_{0}) \in intP $, that is $d(x_{n},y_{0})<<(r+ \epsilon)$ for all $n \geq k_{2}$. Hence $y_{0} \in LIM^{r} x_{n}$.
\end{proof}
%%%%%%%%%%%%%%%%%%%%%Theorem 3.8%%%%%%%%%%%%%%%%%%%%%%%%%%%%%%%
\begin{thm}
Let $\left\{ x_{n} \right\}$ and $\left\{ y_{n} \right\}$ be two sequences in $(X,d)$. If for every $(0<<) \epsilon$ there exists a $k \in \mathbb{N}$ such that $d(x_{n}, y_{n}) \leq \epsilon$ for all $n \geq k$. Then $\left\{ x_{n} \right\}$ is $r$-convergent to $x$ if and only if $\left\{ y_{n} \right\}$ is $r$-convergent to $x$.
\end{thm}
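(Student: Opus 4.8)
\noindent The plan is to exploit the fact that the hypothesis is symmetric in $\{x_{n}\}$ and $\{y_{n}\}$: by $(d2)$, the statement ``for every $(0<<)\epsilon$ there exists $k\in\mathbb{N}$ with $d(x_{n},y_{n})\leq\epsilon$ for all $n\geq k$'' is literally unchanged when the two sequences are interchanged. Hence it suffices to prove a single implication, say that $\{x_{n}\}$ being $r$-convergent to $x$ forces $\{y_{n}\}$ to be $r$-convergent to $x$; the converse then follows word for word with the roles of $x_{n}$ and $y_{n}$ exchanged.

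For that implication I would fix an arbitrary $(0<<)\epsilon$ and pass to $\frac{\epsilon}{2}$, which is again $\gg 0$ by Theorem 2.1. Since $\{x_{n}\}$ is $r$-convergent to $x$, Definition 3.1 gives a $k_{1}\in\mathbb{N}$ with $d(x_{n},x)<<r+\frac{\epsilon}{2}$, i.e. $(r+\frac{\epsilon}{2})-d(x_{n},x)\in intP$, for all $n\geq k_{1}$; the hypothesis, applied with $\frac{\epsilon}{2}$ in place of $\epsilon$, gives a $k_{2}\in\mathbb{N}$ with $d(x_{n},y_{n})\leq\frac{\epsilon}{2}$, i.e. $\frac{\epsilon}{2}-d(x_{n},y_{n})\in P$, for all $n\geq k_{2}$. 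Put $k_{0}=\max\{k_{1},k_{2}\}$.

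The heart of the argument, carried out for every $n\geq k_{0}$, is then the same two-step ``add a cone element to an interior element'' maneuver used in the proofs of Theorems 3.3 and 3.7, invoking Theorem 2.2 each time. First, $[(r+\frac{\epsilon}{2})-d(x_{n},x)]+[\frac{\epsilon}{2}-d(x_{n},y_{n})]=(r+\epsilon)-[d(x_{n},x)+d(x_{n},y_{n})]\in intP$. Second, $(d2)$ and $(d3)$ give $d(y_{n},x)\leq d(y_{n},x_{n})+d(x_{n},x)=d(x_{n},y_{n})+d(x_{n},x)$, so that $[d(x_{n},x)+d(x_{n},y_{n})]-d(y_{n},x)\in P$; adding this cone element to the interior element just obtained yields $(r+\epsilon)-d(y_{n},x)\in intP$, that is $d(y_{n},x)<<r+\epsilon$ for all $n\geq k_{0}$. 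As $\epsilon$ was arbitrary, $\{y_{n}\}$ is $r$-convergent to $x$, and by the symmetry remark the stated equivalence follows.

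I do not anticipate a genuine obstacle here: the argument is a routine adaptation of estimates already appearing in Section 3. The only points needing a little care are that halving preserves the relation $0<<\cdot$ (so that Theorem 2.1 is genuinely used), that Theorem 2.2 is precisely what allows one to combine the ``$\leq$''-type information $d(x_{n},y_{n})\leq\frac{\epsilon}{2}$ (a cone element) with the ``$<<$''-type information $d(x_{n},x)<<r+\frac{\epsilon}{2}$ (an interior element), and that one should route the comparison through the auxiliary quantity $d(x_{n},x)+d(x_{n},y_{n})$ rather than attempt to compare $d(y_{n},x)$ with $r+\epsilon$ directly, since the order on $E$ need not be total.
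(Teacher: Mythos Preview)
Your proof is correct and follows essentially the same approach as the paper: split a given $\epsilon$ into two halves, use one half with the $r$-convergence of $\{x_{n}\}$ and the other with the hypothesis on $d(x_{n},y_{n})$, then combine via the triangle inequality and Theorem 2.2 to obtain $d(y_{n},x)<<r+\epsilon$ beyond $\max\{k_{1},k_{2}\}$, with the converse handled by symmetry. Your write-up is in fact slightly more careful than the paper's in that you explicitly invoke Theorem 2.1 to justify $0<<\frac{\epsilon}{2}$ and flag precisely where Theorem 2.2 is applied.
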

\begin{proof}
Let $\left\{ x_{n} \right\}$ be $r$-convergent to $x$ and $(0<<) \epsilon$ be preassigned. Then for $0<< \epsilon$ there exist $k_{1}$, $k_{2} \in \mathbb{N}$ such that $d(x_{n}, x) << r+ \frac { \epsilon}{2}$ for all $n \geq k_{1}$ and $d(x_{n}, y_{n}) \leq \frac { \epsilon}{2}$ for all $n \geq k_{2}$. Let $k> max\left\{k_{1}, k_{2}\right\}$. Then for all $n \geq k$ we have $d(x_{n}, x) << r+ \frac { \epsilon}{2}$ and $d(x_{n}, y_{n}) \leq \frac { \epsilon}{2}$, that is we have 
\begin{center} $(r+ \frac { \epsilon}{2}) - d(x_{n}, x) \in int P$ for all $n \geq k     \longrightarrow (i)$ \end{center}
\begin{center}and  $[\frac { \epsilon}{2} - d(x_{n}, y_{n})] \in P$ for all $n \geq k     \longrightarrow (ii)$ \end{center}
Again for all $n \in \mathbb{N}$ we can write $d(y_{n}, x) \leq d(x_{n}, y_{n}) + d(x_{n}, x)$. Hence we have $[d(x_{n}, y_{n}) + d(x_{n}, x)] - d(y_{n}, x) \in P$ for all $n \in \mathbb{N}  \longrightarrow (iii)$. Therefore from $(i)$, $(ii)$ and $(iii)$ it follows that $(r + \epsilon) - d(y_{n}, x) \in intP$ for all $n \geq k$ and hence $d(y_{n}, x) << r + \epsilon$ for all $n \geq k$. Therefore $\left\{ y_{n} \right\}$ is $r$-convergent to $x$. Interchanging the roll of $\left\{ x_{n} \right\}$ and $\left\{ y_{n} \right\}$ it can be shown that if $\left\{ y_{n} \right\}$ is $r$-convergent to $x$ then $\left\{ x_{n} \right\}$ is $r$-convergent to $x$.
\end{proof}

%%%%%%%%%%%%%%%%%%%%Theorem 3.9%%%%%%%%%%%%%%%%%%
\begin{thm}
Let $\left\{x_{n_{k}}\right\}$ be a sub sequence of $\left\{ x_{n} \right\}$ then $LIM^{r}  x_{n} \subset LIM^{r} x_{n_{k}}$.
\end{thm}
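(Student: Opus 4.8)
The plan is to show that an arbitrary element of $LIM^{r}x_{n}$ also lies in $LIM^{r}x_{n_{k}}$; equivalently, that $r$-convergence passes to subsequences. So I would fix $x \in LIM^{r}x_{n}$ and aim to prove that $\{x_{n_{k}}\}$ is $r$-convergent to $x$, i.e. $x_{n_{k}} \stackrel{r}{\rightarrow} x$, which by Definition 3.1 gives $x \in LIM^{r}x_{n_{k}}$; since $x$ was arbitrary this yields the asserted inclusion.

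The argument itself is a direct transcription of Definition 3.1. Let $\epsilon \in E$ with $0 << \epsilon$ be preassigned. Since $x \in LIM^{r}x_{n}$, there is a $k_{0} \in \mathbb{N}$ such that $d(x_{n}, x) << r + \epsilon$ for all $n \geq k_{0}$. The only combinatorial ingredient is the elementary fact that, for a subsequence $\{x_{n_{k}}\}$, the index sequence $(n_{k})$ is strictly increasing in $\mathbb{N}$, hence $n_{k} \geq k$ for every $k$; in particular $n_{k} \geq k_{0}$ whenever $k \geq k_{0}$. Substituting $n = n_{k}$ into the estimate above gives $d(x_{n_{k}}, x) << r + \epsilon$ for all $k \geq k_{0}$. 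As $\epsilon$ was an arbitrary element of $E$ with $0 << \epsilon$, this is precisely the statement that $\{x_{n_{k}}\}$ is $r$-convergent to $x$.

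I do not anticipate a genuine obstacle here: unlike several of the earlier theorems, this one does not require the auxiliary results on $\mathrm{int}\,P$ (Theorems 2.1--2.2), since no sums of interior and non-interior vectors are formed; the single nontrivial step is the inequality $n_{k} \geq k$. One should note, however, that only the inclusion in one direction holds in general — a subsequence may be $r$-convergent to points to which the full sequence is not — so equality of the two limit sets is not to be expected.
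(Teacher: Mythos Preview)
Your proof is correct and follows essentially the same approach as the paper: fix an element of $LIM^{r}x_{n}$, use the definition of $r$-convergence to obtain a threshold index, and then invoke the monotonicity of the subsequence indices to transfer the estimate to $\{x_{n_{k}}\}$. The only cosmetic difference is that the paper chooses $p$ with $n_{p}>m$ and uses $n_{k}\geq n_{p}$ for $k\geq p$, whereas you use the equivalent fact $n_{k}\geq k$ directly.
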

\begin{proof}

Let $y \in LIM^{r} x_{n}$ and $(0<<)\epsilon$ be arbitrary. Then there exists a $m \in \mathbb{N}$ such that $d(x_{n}, y)<< r+\epsilon$ for all $n \geq m$. Let $n_{p} > m$ for some $p \in \mathbb{N}$. Then $n_{k}>m$ for all $k \geq p$. Therefore $d(x_{n_{k}}, y)<< r+ \epsilon$ for all $k>p$. Hence $y \in LIM^{r} x_{n_{k}}$.
\end{proof}
%%%%%%%%%%%%%%%%%%Theorem 3.10%%%%%%%%%%%%%%%%%

\begin{thm}
Let $\mathcal{C}$ be the set of all cluster points of a sequence $\left\{ x_{n} \right\}$ in $(X, d)$. Also let $0<< r$ be such that for any $x \in X$ either $d(x, c) \leq r$ or $r<< d(x, c)$ for each $c \in \mathcal{C}$. Then $LIM^{r} x_{n} \subset \bigcap_{c \in \mathcal{C}} \overline{B_{r}(c)} \subset \left\{ x_{0} \in X : \mathcal{C} \subset \overline{B_{r}(x_{0})} \right\}$.
\end{thm}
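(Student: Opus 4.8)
The plan is to prove the two inclusions separately. The first one is essentially Theorem 3.6 applied simultaneously to every cluster point, while the second is a direct consequence of the symmetry axiom $(d2)$ of the cone metric.

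For the inclusion $LIM^{r} x_{n} \subset \bigcap_{c \in \mathcal{C}} \overline{B_{r}(c)}$, I would fix an arbitrary $c \in \mathcal{C}$. By hypothesis, for every $x \in X$ we have either $d(x,c) \leq r$ or $r << d(x,c)$, so the dichotomy condition required by Theorem 3.6 holds for this particular $c$; since $c$ is a cluster point of $\{x_{n}\}$, Theorem 3.6 gives $LIM^{r} x_{n} \subset \overline{B_{r}(c)}$. As $c \in \mathcal{C}$ was arbitrary, intersecting over all cluster points yields $LIM^{r} x_{n} \subset \bigcap_{c \in \mathcal{C}} \overline{B_{r}(c)}$. (If $\mathcal{C} = \phi$ the intersection is taken to be $X$ and the inclusion is trivial.)

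For the inclusion $\bigcap_{c \in \mathcal{C}} \overline{B_{r}(c)} \subset \left\{ x_{0} \in X : \mathcal{C} \subset \overline{B_{r}(x_{0})} \right\}$, I would take any $x_{0} \in \bigcap_{c \in \mathcal{C}} \overline{B_{r}(c)}$ and show $\mathcal{C} \subset \overline{B_{r}(x_{0})}$. Fix $c \in \mathcal{C}$; then $x_{0} \in \overline{B_{r}(c)}$ means $d(x_{0}, c) \leq r$, and by $(d2)$ we get $d(c, x_{0}) = d(x_{0}, c) \leq r$, i.e. $c \in \overline{B_{r}(x_{0})}$. Letting $c$ range over $\mathcal{C}$ gives $\mathcal{C} \subset \overline{B_{r}(x_{0})}$, so $x_{0}$ lies in the right-hand set, completing the proof.

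I do not expect a genuine obstacle here. The only point needing a little care is confirming that the dichotomy hypothesis needed to invoke Theorem 3.6 is available for each individual $c \in \mathcal{C}$, which is exactly what the blanket assumption ``for any $x \in X$ either $d(x,c) \leq r$ or $r << d(x,c)$ for each $c \in \mathcal{C}$'' provides; beyond that it is only a matter of using the symmetry of $d$ and fixing the convention for the empty intersection when $\{x_{n}\}$ has no cluster points.
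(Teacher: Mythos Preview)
Your proposal is correct and mirrors the paper's own proof: both obtain the first inclusion by applying Theorem~3.6 to each cluster point and intersecting, and both obtain the second inclusion by the symmetry of $d$ turning $d(x_{0},c)\leq r$ into $c\in\overline{B_{r}(x_{0})}$. Your added remarks on checking the dichotomy hypothesis and handling the case $\mathcal{C}=\phi$ are minor elaborations beyond what the paper writes, but the argument is the same.
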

\begin{proof}
By theorem 3.6 we can say that $LIM^{r} x_{n} \subset \bigcap_{c \in \mathcal{C}} \overline{B_{r}(c)}$. Now let $ y \in \bigcap_{c \in \mathcal{C}} \overline{B_{r}(c)}$. So $y \in \overline{B_{r}(c)}$ for each $c \in \mathcal{C}$ and hence $d(y,c) \leq r$ for each $c\in \mathcal{C}$. This implies that $c \in \overline{B_{r}(y)}$ for each $c \in \mathcal{C}$. Therefore $\mathcal{C} \subset \overline{B_{r}(y)}$. So $\bigcap_{c \in \mathcal{C}} \overline{B_{r}(c)} \subset \left\{ x_{0} \in X : \mathcal{C} \subset \overline{B_{r}(x_{0})} \right\}$. Therefore we have $LIM^{r} x_{n} \subset \bigcap_{c \in \mathcal{C}} \overline{B_{r}(c)} \subset \left\{ x_{0} \in X : \mathcal{C} \subset \overline{B_{r}(x_{0})} \right\}$.
\end{proof}
%%%%%%%%%%%%%%%%%%%%%%%LEMMA 3.1%%%%%%%%%%%
\noindent \textbf{Lemma 3.1.} Let $(X,d)$ be a cone metric space, where $P$ is a normal cone with normal constant $k$. Then for every
$\epsilon >0$, we can choose $c \in E$ with $c \in int P$ and $k\left\|c\right\| < \epsilon$.

\begin{proof}
Let $\epsilon >0$ be given and let $x \in int P$. We consider $c=  \frac {x.\frac{\epsilon}{ 2}}  {\left\|x\right\| .k}$. Clearly $c \in int P$ and also $\left\|c\right\|= \frac{\epsilon} {2.k} < \frac{\epsilon} {k}$. Hence $k\left\|c\right\| < \epsilon$.
\end{proof}
%%%%%%%%%%%%%%%%%%%%LEMMA 3.2%%%%%%%%%%%%%%
\noindent \textbf{Lemma 3.2.} Let $(X,d)$ be a cone metric space, $P$ be a normal cone with normal constant $k$. Then for each $c \in E$ with $0<<c$, there is a $\delta > 0$, such that $\left\|x\right\| < \delta $ implies $c- x \in int P$.

\begin{proof}

Since $0<<c$, $c \in int P$ and so there exists a $\delta >0$, such that $B_{\delta}(c)=\{x \in E: \left\|x-c\right\| < \delta \} \subset P$. Now $\left\| x \right\| < \delta$ implies $\left\|c-(c- x) \right\| < \delta$. Hence $c- x \in B_{\delta}(c)$ and therefore $c-x \in int P$.

\end{proof}

%%%%%%%%%%%%%%%%%%%Theorem 3.11%%%%%%%%%%%%
\begin{thm}
Let $(X,d)$ be a cone metric space, $P$ be a normal cone with normal constant $k$. Then a sequence $\left\{x_{n}\right\}$ in $(X,d)$ is $r$-convergent to $x$ if and only if $\left\{ d(x_{n},x)-r\right\}$ converges to $0$ in $E$.
\end{thm}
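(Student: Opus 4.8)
The plan is to prove the two implications separately, using Lemmas 3.2 and 3.1 as the bridge between the order-theoretic description of rough convergence in Definition 3.1 and convergence of $\{d(x_n,x)-r\}$ to $0$ in $E$ (that is, $\left\|d(x_n,x)-r\right\|\to 0$).

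First I would dispose of the ``if'' part. Assume $\left\|d(x_n,x)-r\right\|\to 0$, and let $\epsilon\in E$ with $0<<\epsilon$ be the quantity occurring in Definition 3.1. Applying Lemma 3.2 with $c=\epsilon$ gives a $\delta>0$ such that $\left\|y\right\|<\delta$ forces $\epsilon-y\in intP$. Since $\left\|d(x_n,x)-r\right\|\to 0$, there is $N\in\mathbb{N}$ with $\left\|d(x_n,x)-r\right\|<\delta$ for all $n\ge N$; taking $y=d(x_n,x)-r$ yields $(r+\epsilon)-d(x_n,x)=\epsilon-(d(x_n,x)-r)\in intP$, that is, $d(x_n,x)<<r+\epsilon$ for all $n\ge N$. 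As $\epsilon$ was arbitrary, $\{x_n\}$ is $r$-convergent to $x$.

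For the converse, assume $\{x_n\}$ is $r$-convergent to $x$ and fix a real number $\eta>0$. By Lemma 3.1 choose $c\in intP$ with $\left\|c\right\|$ as small as needed, and use $c$ in place of $\epsilon$ in Definition 3.1 to obtain $N\in\mathbb{N}$ with $d(x_n,x)<<r+c$, hence $(r+c)-d(x_n,x)\in intP\subseteq P$, for all $n\ge N$. Together with $0\le d(x_n,x)$ from axiom $(d1)$, the aim is to reach a two-sided estimate $-c\le d(x_n,x)-r\le c$ for $n\ge N$; once this is in hand, normality of $P$ bounds $\left\|d(x_n,x)-r\right\|$ by a fixed multiple of $\left\|c\right\|$, which is $<\eta$ by the choice of $c$, and since $\eta>0$ was arbitrary we conclude $\{d(x_n,x)-r\}\to 0$ in $E$.

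I expect the main obstacle to be precisely the lower estimate in the previous paragraph. Rough convergence as formulated in Definition 3.1 hands us only the upper bound $d(x_n,x)\le r+c$, so one has to argue that, for $n$ large, $d(x_n,x)$ also stays above $r-c$ in the cone order; this is where the strength of the definition (the inequality holding for \emph{every} $\epsilon$ with $0<<\epsilon$, not a single one), the closedness of $P$, and normality all have to be brought to bear. Once that two-sided control is secured the argument closes, the forward ``if'' direction being essentially immediate from Lemma 3.2.
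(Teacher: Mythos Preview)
Your treatment of the ``if'' direction is correct and matches the paper's argument via Lemma~3.2 essentially verbatim.

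For the ``only if'' direction, you and the paper diverge in an instructive way. After obtaining $d(x_{n},x)-r<<c$, the paper simply writes $\left\|d(x_{n},x)-r\right\|\le k\left\|c\right\|<\epsilon$ and concludes; that step tacitly invokes normality, which however requires $0\le d(x_{n},x)-r$ in addition to the upper bound. You are right to isolate the missing lower estimate as the crux---but the tools you propose (universality in $\epsilon$, closedness of $P$, normality) cannot supply it, because the forward implication is false as stated. Take the constant sequence $x_{n}=x$: then $d(x_{n},x)=0<<r+\epsilon$ for every $0<<\epsilon$, so $\{x_{n}\}$ is $r$-convergent to $x$; yet $d(x_{n},x)-r=-r$ does not tend to $0$ in $E$ unless $r=0$. (Note that axiom $(d1)$ gives only $-r\le d(x_{n},x)-r$, not $-c\le d(x_{n},x)-r$.) The obstacle you identified is therefore genuine and unremovable; both your outline and the paper's proof have a gap at precisely this point, and the statement would need an extra hypothesis---for instance $r\le d(x_{n},x)$ eventually---or a weaker conclusion for the forward direction to hold.
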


\begin{proof}
First suppose that $\left\{x_{n}\right\}$ is $r$-convergent to $x$. Let $\epsilon >0$ be preassigned. Then we have an element $c \in E$ with $0<<c$ and $k \left\| c \right\| < \epsilon$. Now for $0<<c$ there exists a $k_{1} \in \mathbb{N}$ such that $d(x_{n},x)<<r+c$ for all $n \geq k_{1}$. Hence $d(x_{n},x)-r<<c$ for all $n \geq k_{1}$. Now $\left\| d(x_{n},x)-r \right\| \leq k \left\| c \right\| < \epsilon$ for all $n \geq k_{1}$. Therefore $\left\{ d(x_{n},x)-r\right\}$ converges to $0$ in $E$.\\

Conversely let $\left\{ d(x_{n},x)-r\right\}$ converges to $0$ in $E$. For any $c \in E$ with $0<<c$, there is a $\delta >0 $, such that $\left\| x \right\| <\delta$ implies $c- x \in intP$. For this $\delta$ there is a $k_{1} \in \mathbb{N}$, such that $\left\| d(x_{n},x)-r\right\| < \delta$ for all $n \geq k_{1}$. So $c- [d(x_{n},x)-r] \in intP$ for all $n \geq k_{1}$. Hence  $d(x_{n},x)<<r+c$ for all $n \geq k_{1}$. Therefore $\left\{x_{n}\right\}$ in $(X,d)$ is $r$-convergent to $x$.
\end{proof}
%%%%%%%%%%%%%%%Theorem 3.12%%%%%%%%%%%%%%%%%
\begin{thm}
Let $(X,d)$ be a cone metric space with normal cone $P$ and normal constant $k$. If $\{x_{n} \}$ and $\{y_{n} \}$ be two sequences $\frac{1} {4k+2}r$-convergent to $x$ and $y$ respectively in $X$, then the sequence $\{z_{n} \}$ is $\left\| r \right\|$-convergent to $d(x,y)$ where $0<<r$ and $z_{n}= d(x_{n}, y_{n})$ for all $n \in \mathbb{N}$.
\end{thm}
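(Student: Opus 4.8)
The plan is to reduce the whole statement to a single norm estimate in $E$: first turn the cone-metric hypotheses into convergence statements in $E$ via Theorem 3.11, then use a quadrilateral inequality to sandwich $z_{n}-d(x,y)$ between $\pm w_{n}$ for a suitable $w_{n}\in P$, and finally exploit normality to pass from that order sandwich to the norm.

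First I would fix the abbreviations $a_{n}=z_{n}-d(x,y)=d(x_{n},y_{n})-d(x,y)$ and $w_{n}=d(x_{n},x)+d(y_{n},y)$, noting $0\leq w_{n}$ since $w_{n}\in P$. Applying $(d3)$ twice (once through $x$, once through $y$) gives $d(x_{n},y_{n})\leq d(x_{n},x)+d(x,y)+d(y,y_{n})$, and symmetrically $d(x,y)\leq d(x,x_{n})+d(x_{n},y_{n})+d(y_{n},y)$; together with $(d2)$ these yield $-w_{n}\leq a_{n}\leq w_{n}$, i.e. $w_{n}+a_{n}\in P$ and $w_{n}-a_{n}\in P$.

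Next I would translate the two hypotheses. Since $P$ is normal with constant $k$, Theorem 3.11 applies: because $\{x_{n}\}$ is $\frac{1}{4k+2}r$-convergent to $x$ and $\{y_{n}\}$ is $\frac{1}{4k+2}r$-convergent to $y$, we get $d(x_{n},x)-\frac{1}{4k+2}r\to 0$ and $d(y_{n},y)-\frac{1}{4k+2}r\to 0$ in $E$; adding and using continuity of addition in $E$, $w_{n}\to\frac{1}{2k+1}r$, hence $\left\|w_{n}\right\|\to\frac{1}{2k+1}\left\|r\right\|$. Now comes the one genuinely cone-theoretic step: from $-w_{n}\leq a_{n}\leq w_{n}$ and $0\leq w_{n}$ one has $0\leq a_{n}+w_{n}\leq 2w_{n}$, so normality gives $\left\|a_{n}+w_{n}\right\|\leq 2k\left\|w_{n}\right\|$, whence $\left\|a_{n}\right\|\leq\left\|a_{n}+w_{n}\right\|+\left\|w_{n}\right\|\leq(2k+1)\left\|w_{n}\right\|$. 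Combining, $\limsup_{n}\left\|a_{n}\right\|\leq(2k+1)\cdot\frac{1}{2k+1}\left\|r\right\|=\left\|r\right\|$, so for each $\epsilon>0$ there is $N\in\mathbb{N}$ with $\left\|z_{n}-d(x,y)\right\|=\left\|a_{n}\right\|<\left\|r\right\|+\epsilon$ for all $n\geq N$; that is exactly the assertion that $\{z_{n}\}$ is $\left\|r\right\|$-convergent to $d(x,y)$ in the normed space $E$ in the sense of Definition 2.1.

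The main obstacle — and the reason the roughness degree is the peculiar $\frac{1}{4k+2}r=\frac{1}{2(2k+1)}r$ — is precisely the implication $-w_{n}\leq a_{n}\leq w_{n}\Rightarrow\left\|a_{n}\right\|\leq(2k+1)\left\|w_{n}\right\|$: normality only controls order intervals of the form $0\leq u\leq v$, so one must first symmetrize to $0\leq a_{n}+w_{n}\leq 2w_{n}$ and then pay the extra ``$+1$'' when subtracting $w_{n}$ back out, which forces the factor $2(2k+1)$ in the hypothesis. A secondary point worth stating carefully is that, since $z_{n}\in E$, the relevant notion of $\left\|r\right\|$-convergence here is Phu's original rough convergence in a normed linear space (Definition 2.1), not the cone-metric version of Definition 3.1.
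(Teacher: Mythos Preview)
Your argument is correct and follows the same skeleton as the paper's: both derive the quadrilateral bound $-w_{n}\leq z_{n}-d(x,y)\leq w_{n}$ with $w_{n}=d(x_{n},x)+d(y_{n},y)$, symmetrize to $0\leq (z_{n}-d(x,y))+w_{n}\leq 2w_{n}$, apply normality, and then recover $\left\|z_{n}-d(x,y)\right\|\leq(2k+1)\left\|w_{n}\right\|$ by the triangle inequality. The only difference is packaging: the paper works straight from the definition, fixing $c\in intP$ with $\left\|c\right\|<\frac{\epsilon}{4k+2}$ and carrying the explicit upper bound $2\bigl(c+\frac{r}{4k+2}\bigr)$ for $w_{n}$ through the estimates by hand, whereas you invoke Theorem~3.11 to obtain $w_{n}\to\frac{1}{2k+1}r$ in $E$ and finish with a $\limsup$. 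Your route is a bit cleaner and makes the provenance of the constant $4k+2=2(2k+1)$ transparent, and your observation that the conclusion is rough convergence in the normed space $E$ in the sense of Definition~2.1 (not Definition~3.1) is a clarification the paper leaves tacit.
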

\begin{proof}
Let $\epsilon > 0$ be preassigned and let $x \in intP$. Then $c= \frac {x \frac{\epsilon}{2}} {\left\|x\right\|(4k+2)} \in intP$ and $\left\|c \right\| < \frac {\epsilon} {4k+2}$. Since $\{x_{n} \}$ and $\{y_{n} \}$ are $\frac{1} {4k+2}r$-convergent to $x$ and $y$ respectively, for $0<< c$, there exists a $k \in \mathbb{N}$ such that $d(x_{n}, x)<<c +\frac{r}{4k+2}$ and $d(y_{n}, y)<<c +\frac{r}{4k+2}$ for all $n \geq k$.
Therefore we have $(c +\frac{r}{4k+2})- d(x_{n}, x) \in intP$ and $(c +\frac{r}{4k+2})- d(y_{n}, y) \in intP$ for all $n \geq k$. Hence $2(c +\frac{r}{4k+2})-[d(x_{n}, x)+d(y_{n}, y)]\in intP$ for all $n \geq k$. Now denoting $\frac{r}{4k+2}$ by $r^{'}$, we have $2(c +r^{'})-[d(x_{n}, x)+d(y_{n}, y)]\in intP$ for all $n \geq k \longrightarrow (i)$.\\

Now $d(x,y) \leq d(x_{n}, x) + d(x_{n}, y)$ for all $n \in \mathbb{N}$, that is $[d(x_{n}, x) + d(x_{n}, y)]-d(x,y) \in P$ for all $n \in \mathbb{N} \longrightarrow (A)$. Also $d(x_{n}, y)\leq d(x_{n}, y_{n})+d(y_{n}, y)$ for all $n \in \mathbb{N}$. Hence $[d(x_{n}, y_{n})+d(y_{n}, y)]- d(x_{n}, y) \in P$ for all $n \in \mathbb{N} \longrightarrow(B)$. From $(A)$ and $(B)$ we have $[d(x_{n}, x) + d(y_{n}, y)+d(x_{n}, y_{n})]-d(x,y) \in P$ for all $n \in \mathbb{N} \longrightarrow (ii)$. Again $d(x_{n}, y_{n}) \leq d(x_{n}, x) + d(y_{n}, y)+d(x,y)$ for all $n \in \mathbb{N}$. So $[d(x_{n}, x) + d(y_{n}, y)+d(x,y)]-d(x_{n}, y_{n}) \in P$ for all $n \in \mathbb{N} \longrightarrow (iii)$. Now from $(i)$ and $(iii)$ we can write $2(c +r^{'})+[d(x,y)- d(x_{n}, y_{n})] \in intP$ for all $n \geq k \longrightarrow (iv)$. Now from $(i)$ and $(ii)$ we have $2(c +r^{'}) + [d(x_{n}, y_{n}) - d(x,y)] \in intP$ for all $n \geq k$. Hence $4(c +r^{'})-[2(c +r^{'}) +d(x,y)- d(x_{n}, y_{n})] \in intP$ for all $n \geq k$, that is $[2(c +r^{'}) +d(x,y)- d(x_{n}, y_{n})]<< 4(c +r^{'})$ for all $n \geq k$. Now by $(iv)$ we can write $0<< 2(c +r^{'})+[d(x,y)- d(x_{n}, y_{n})]$ for all $n \geq k$. Also we have $0<< 4(c +r^{'})$. Hence by normality of $P$ we can write $\left\|  2(c +r^{'})+[d(x,y)- d(x_{n}, y_{n})] \right\| \leq k \left\|4(c +r^{'}) \right\|$ for all $n \geq k \longrightarrow (v)$. Now $\left\| d(x,y)- d(x_{n}, y_{n}) \right\| = \left\|d(x,y)- d(x_{n}, y_{n}) +2(c +r^{'})- 2(c +r^{'}) \right\| \leq \left\|d(x,y)- d(x_{n}, y_{n}) +2(c +r^{'})\right\| + \left\|2(c +r^{'}) \right\|$ for all $n \in \mathbb{N} \longrightarrow (vi)$. From $(v)$ and $(vi)$ we have $\left\| d(x,y)- d(x_{n}, y_{n}) \right\| \leq 4k\left\|(c +r^{'})\right\| + 2\left\|(c +r^{'}) \right\|$ for all $n \geq k$. Also $4k\left\|(c +r^{'})\right\| + 2\left\|(c +r^{'}) \right\| = (4k+2)\left\|(c +r^{'}) \right\| \leq (4k+2) \left\|c \right\| + (4k+2)\left\| r^{'} \right\| < \epsilon + \left\| r \right\|$. Hence $\left\| d(x_{n}, y_{n})-  d(x,y)\right\| \newline < \epsilon + \left\| r \right\|$ for all $n \geq k$.
\end{proof}
%%%%%%%%%%%%%%%%%%%%%%%%%%%%%%%%%%%%%%%%%%%%%%%%%%%%%%%%%%%%%%%%%%%%%%%%%%%%%%%%%%%%%%%%%%%%%%%%%%%%%%%%%%%%%%%%%%%%%%%%
%%%%%%%%%%%%%%%%%%%%%%%%%%%%%%%%%%%%%%%%%%%%%%%%%%%%%%%%%%%%%%%%%%%%%%%%%%%%%%%%%%%%%%%%%%%%%%%%%%%%%%%%%%%%%%%%%%%%%%%%%

\end{document}